\documentclass[12pt]{article}
\usepackage{amsfonts}

\usepackage{graphicx}
\usepackage{amsmath}


\newtheorem{theorem}{Theorem}

\newtheorem{definition}[theorem]{Definition}

\newtheorem{lemma}[theorem]{Lemma}

\newtheorem{proposition}[theorem]{Proposition}
\newtheorem{remark}[theorem]{Remark}

\newenvironment{proof}[1][Proof]{\textbf{#1.} }{\ \rule{0.5em}{0.5em}}

\oddsidemargin 0.0in
\textwidth 6.5in



\begin{document}

\title{Picard iterations for diffusions on symmetric matrices}
\author{ 
Carlos G. Pacheco\thanks{Departamento de Matematicas, CINVESTAV-IPN, A. Postal 14-740, Mexico D.F. 07000, MEXICO. Email: cpacheco@math.cinvestav.mx}
}

\maketitle

\begin{abstract}
Matrix-valued stochastic processes have been of significant importance in areas such as physics, engineering and mathematical finance. 
One of the first models studied has been the so-called Wishart process, which is described as the solution of a stochastic differential equation in the space of matrices.
In this paper we analyze natural extensions of this model, and prove the existence and uniqueness of the solution.
We do this by carrying out a Picard iteration technique in the space of symmetric matrices. 
This approach takes into account the operator character of the matrices, which helps to corroborate how the Lipchitz conditions also arise naturally in this context.
\end{abstract}

{\bf 2000 Mathematics Subject Classification: }

\textit{Keywords:} Matrix-valued diffusions, Lipschitz conditions, Picard iterations.


\section{Introduction}
	
Bru \cite{Bru91} introduced the so called Wishart process, which is specified by the following stochastic differential equation (SDE) valued in the space of symmetric $d\times d$ matrices for certain values $\alpha$ in the so-called Wallach set (i.e. $\alpha\in\{1,2,\ldots, d-1\}\bigcup [d-1,\infty)$) and some initial condition $X_{0}$,
\begin{equation*}\label{Wpro} 
dX_{t}=\sqrt{X_{t}}dB_{t}+dB_{t}^{T}\sqrt{X_{t}}+\alpha Idt,
\end{equation*}
where $B_t$ is a $d\times d$ matrix with each entry being a Brownian motion, all of them independent.
If $X_{t}$ belongs to the set of positive semidefinite symmetric matrices, one can properly define $\sqrt{X_{t}}$ for each $t\geq 0$, as $\sqrt{X_{t}}=U_{t}\sqrt{\Lambda_{t}}U_{t}^{T}$, where $U_{t}\Lambda_{t}U_{t}^{T}$ is the spectral decomposition of $X_{t}$, and $\sqrt{\Lambda_{t}}$ represents the diagonal matrix where the diagonal is given by the square roots of the eigenvalues increasingly ordered. 

The work in \cite{Bru91} has motivated several studies and applications, let us mention for example \cite{Katori, Mayerhofer, Konig}.
Under certain conditions, Bru tells us that such an equation has a unique strong solution.
To argue that, Bru \cite{Bru91} appeals first to the fact, taken from \cite{Rogers}, that the square root is an analytic function in the space of symmetric positive matrices. 
Then, she refers to the result in Ikeda and Watanabe \cite{Ikeda} about the unique existence of a solution of a vector-valued diffusion to conclude the uniqueness and existence of the solution. 
Here we propose taking a different route, where one needs some results on matrices.

A more general model is given by 
\begin{equation}\label{GWpro}
dX_{t}=g(X_{t})dB_{t}f(X_{t})+f(X_{t})dB_{t}^{T}g(X_{t})+b(X_{t})dt,
\end{equation}
where $g, \ f$ and $b$ are matrix valued functions acting on matrices. 
One example is when one takes $\mathbb{R}\to\mathbb{R}$ functions and uses spectral decomposition to obtain matrix-valued functions; actually, we were motivated to study equation (\ref{GWpro}) after seeing this point of view in \cite{Graczyk}. 

In this paper we propose using the Picard iteration method to stablish the existence of the solution, as well as exploiting the operator character of the matrix to study the equation.
We think  that dealing directly (instead of passing through vector-valued diffusions) with the matrix-equation can be useful to obtain more insight into matrix diffusions.
Thus, we hope that our results help to complement theory already developed in papers such as 
\cite{Bru89, Bru91, Cuchiero, Donati, Graczyk2, Kang, Mayerhofer, Pfaffel, Stelzer}. 
To carry out the proof, we develop a few results suited to handle matrix-equations and which are of independent interest.

One important issue that we are not studying here is the so-called \textit{time of collision}; one might read more about this in \cite{McKean, Konig, Pfaffel, Trujillo}.


\section{Preliminaries}

Let $\mathcal{S}_{d\times d}$ be the set of symmetric matrices, and $\mathcal{S}_{d\times d}^{+}$ the positive semidefinite  ones. 
Let $B_{t}$ be a $d\times d$ Brownian motion (i.e. a matrix filled with independent Brownian motions).
We will focus on the following SDE valued in $\mathcal{S}_{d\times d}$:
\begin{equation*}\label{MSDE}
dX_{t}=g(X_{t})dB_{t}f(X_{t})+f(X_{t})dB_{t}^{T}g(X_{t})+b(X_{t})dt,
\end{equation*}
with initial condition $X_{0}\in \mathcal{S}_{d\times d}^{+}$, and where $g, \ f$ and $b$ are $\mathcal{S}_{d\times d}\to \mathcal{S}_{d\times d}$ functions. 
However, we will be more interested in considering $\mathbb{R}\to\mathbb{R}$ functions to construct a diffusion. 
In this case the following consideration is taken for $\mathbb{R}\to\mathbb{R}$ functions $g, \ f$ and $b$. 
If $A\in\mathcal{S}_{d\times d}$, by $g(A)$ we mean $Hg(\Lambda)H^{T}$, where $H\Lambda H^{T}$ is the spectral decomposition of $A$ and $g(\Lambda)$ is the diagonal matrix with the values $g(\lambda_{1}),\ldots,g(\lambda_{d})$ and 
$\lambda_{1}\leq \lambda_{2}\leq \ldots \leq \lambda_{d}$ are the eigenvalues of $A$ increasingly ordered. 
Under this framework it turns out that $X_{t}$ is a symmetric matrix for all $t$. 
In \cite{Horn}, Chapter 6, there is a detailed study of functions acting on spaces of matrices, an idea which is extended in functional analysis to so-called \textit{functional calculus} to define a function of an operator. 

Our aim is to investigate the condition on the functions $g,\ f$ and $b$, under which previous equation has a unique strong solution. As expected, Lipschitz conditions  will play a crucial role. Before we embark on this task, some useful results are in order.

\begin{definition}
For matrices $A$ and $B$, by 
$$A\geq 0$$ 
we mean $A$ is positive semidefinite, that is $x^{T}Ax\geq 0$ for every vector $x$, and by 
$$A\leq B$$ 
we mean $ B-A\geq 0$.
\end{definition}

\begin{remark} The following results will be useful (we used \cite{Zhang} as a general reference).\\
i) For symmetric matrices $A, B$,
\begin{equation}\label{Inq2} 
(A+B)^{2}\leq 2 A^{2}+2B^{2} .
\end{equation}
ii) For a symmetric matrix $A$ and unit vector $x$,
\begin{equation}\label{InqNice}
(x^{T}Ax)^{2}\leq x^{T}A^{2}x .
\end{equation}
A proof of (\ref{InqNice}) can be obtained using the Cauchy-Schwarz inequality. Indeed
$$(x^{T}Ax)^{2}=\langle x, Ax \rangle^{2}\leq \langle x,x \rangle \langle Ax,Ax \rangle=x^{T}A^{T}Ax=x^{T}A^{2}x.$$
\end{remark}

Next, we prove an analogous result of the Cauchy inequality, which will be useful.
\begin{proposition}\label{PropCauchy}
Let $\{A_{t},\ t\geq 0\}$ in $\mathcal{S}_{d\times d}$ with each entry being a continuous function. 
Then
\begin{equation*}
\left( x^{T}\int_{0}^{t}A_{s}ds x\right)^{2}\leq t x^{T} \int_{0}^{t}A_{s}^{2}ds x ,
\end{equation*}
for any unit vector $x$.
\end{proposition}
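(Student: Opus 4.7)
The plan is to reduce the matrix inequality to the scalar Cauchy--Schwarz inequality by extracting the bilinear form $x^T A_s x$ as a scalar function of $s$, and then to upgrade $(x^T A_s x)^2$ to $x^T A_s^2 x$ using inequality (\ref{InqNice}) from the preceding remark.

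More concretely, I would begin by fixing the unit vector $x$ and defining the scalar function $f(s) := x^T A_s x$. Because each entry of $A_s$ is continuous in $s$, $f$ is a continuous real-valued function on $[0,t]$, and linearity of the integral gives
\[
x^T \int_0^t A_s\,ds\, x = \int_0^t x^T A_s x\,ds = \int_0^t f(s)\,ds.
\]
The classical Cauchy--Schwarz inequality applied to $f$ and the constant function $1$ on $[0,t]$ yields
\[
\left(\int_0^t f(s)\,ds\right)^{\!2} \le t\int_0^t f(s)^2\,ds.
\]

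Next I would bound $f(s)^2$ pointwise. Since $A_s$ is symmetric and $x$ is a unit vector, inequality (\ref{InqNice}) gives $f(s)^2 = (x^T A_s x)^2 \le x^T A_s^2 x$. Integrating in $s$ and pulling $x$ out of the integral (again using linearity), we obtain
\[
\int_0^t f(s)^2\,ds \le \int_0^t x^T A_s^2 x\,ds = x^T\!\int_0^t A_s^2\,ds\, x.
\]
Chaining the two estimates produces the claimed inequality.

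I do not expect any serious obstacle: the only subtlety is the legitimacy of interchanging $x^T(\cdot)x$ with the integral, which is immediate from the continuity of the entries of $A_s$ and the fact that $x^T(\cdot)x$ is a (continuous) linear functional on $\mathcal{S}_{d\times d}$. The step of replacing $(x^T A_s x)^2$ by $x^T A_s^2 x$ is what allows the scalar Cauchy--Schwarz bound to be re-expressed in the matrix form stated in the proposition.
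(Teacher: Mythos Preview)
Your proof is correct and complete, but it takes a different route from the paper's.

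The paper argues by discretization: fixing an equidistant partition of $[0,t]$, it observes that the quadratic
\[
F(u) = x^{T} \sum_{i=1}^{n} (\sqrt{\Delta}\, A_{s_i} - u\sqrt{\Delta}\, I)^{2} x
\]
is nonnegative for every real $u$ (each summand being the square of a symmetric matrix, hence positive semidefinite), and then reads off the desired inequality for the Riemann sums from the nonpositivity of the discriminant of this quadratic in $u$. Passing to the limit gives the statement. In effect the paper re-runs the classical discriminant proof of Cauchy--Schwarz at the matrix level, so that the term $x^{T}A_{s}^{2}x$ appears on the right-hand side directly.

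Your route is shorter and more modular: you extract the scalar function $f(s)=x^{T}A_{s}x$, apply the scalar Cauchy--Schwarz inequality once, and then invoke (\ref{InqNice}) pointwise to upgrade $(x^{T}A_{s}x)^{2}$ to $x^{T}A_{s}^{2}x$. This avoids discretization altogether and makes transparent that the proposition is nothing more than scalar Cauchy--Schwarz combined with the single matrix inequality (\ref{InqNice}). The paper's argument, by contrast, does not need (\ref{InqNice}) as a separate input, since its discriminant computation manufactures the $A_{s}^{2}$ term in one stroke; the price is the extra discretize-then-pass-to-the-limit layer.
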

\begin{proof}
First fix a unit vector $x$. Now consider an equidistant partition $\{ s_{1},\ldots, s_{n}\}$ of $[0,t]$ and set $A_{i}:=A_{s_{i}}$ for $i=1,\ldots,n$. Let $\Delta>0$ be the partition size and define 
$$ F(u):= x^{T} \sum_{i=1}^{n} (\sqrt{\Delta}A_{i}-u\sqrt{\Delta}I )^{2} x.$$
We have 
\begin{equation*}
 \sum_{i=1}^{n} (\sqrt{\Delta}A_{i}-u\sqrt{\Delta}I )^{2}\\
 =\underbrace{\Delta A_{i}^{2}+\ldots+\Delta A_{n}^{2}}_{\alpha} - 2 u \underbrace{(\Delta A_{1}+\ldots+\Delta A_{n})}_{\beta}
+ u^{2} \underbrace{(\Delta+\ldots+ \Delta)}_{t}.  
\end{equation*}
Since $F(u)=x^{T}\alpha x -2u x^{T}\beta x +u^{2} x^{T} t x\geq  0$ for all $u$, then for the discriminant
$$(-2 x^{T}\beta x)^{2}- 4 x^{T}\alpha x x^{T} t x\leq 0.$$
But $x^{T} t x= t$, hence
$$x^{T}(\Delta A_{1}^{2}+\ldots+ \Delta A_{n}^{2})x t \geq ( x^{T}(\Delta A_{1}+\ldots +\Delta A_{n} )x)^{2}.$$
The result follows after taking the infinitesimal sum on both sides of the previous inequality.
\end{proof}


\section{Existence and uniqueness}

We will use the following criterion to establish the solubility of the stochastic equation.
\begin{definition}\label{DefLG}
Consider a function $g:\mathcal{S}_{d\times d}\to \mathcal{S}_{d\times d}$. 
We say that $g$ is \textbf{Lipschitz in matrix sense} if there exists a constant $c>0$ such that 
for any pair $A_{1},A_{2}\in\mathcal{S}_{d\times d}$ and any unit vector $x\in\mathbb{R}^{d}$ we have
\begin{equation}\label{IneqGlobalLip}
x^{T}\left( g(A_{1})-g(A_{2}) \right)^{2}x\leq c x^{T}(A_{1}-A_{2})^{2}x.
\end{equation}
\end{definition}

The following two results will be useful for Theorem \ref{ThmGlobal}, their proofs are left in the Appendix, where it is properly defined what we mean by the matrix stochastic integral. 
Notice that the next proposition resembles an isometry property, 
\begin{proposition}\label{PropIso}
Let $A_{t}$ and $C_{t},\ t\geq 0$ be matrix-valued stochastic processes such that
\begin{equation*}
\int_{0}^{t} A_{s}dB_{s}C_{s}
\end{equation*}
is well defined as an It\^{o} stochastic integral. Then, for any pair of vectors $x,y\in \mathbb{R}^{d}$ 
\begin{equation*}
E\left[y^{T}\left( \int_{0}^{t} A_{s}dB_{s}C_{s} \right)^{2}x \right]=\int_{0}^{t}E\left[x^{T}C_{s}^{T}C_{s}A_{s}A_{s}^{T}  y\right]ds.
\end{equation*}
\end{proposition}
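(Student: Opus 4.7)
The plan is to reduce the matrix-valued identity to the standard scalar It\^{o} isometry applied componentwise, then reassemble the resulting sums into a matrix expression. Writing $M_{t}:=\int_{0}^{t}A_{s}\,dB_{s}\,C_{s}$, the definition of the matrix stochastic integral (given in the Appendix) expresses the $(i,j)$-entry as a sum of scalar It\^{o} integrals against the independent Brownian motions $B^{kl}$, namely $M_{t}^{ij}=\sum_{k,l}\int_{0}^{t}A_{s}^{ik}C_{s}^{lj}\,dB_{s}^{kl}$.

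First I would expand the quadratic form $y^{T}M_{t}^{2}x=\sum_{i,j,p}y^{i}M_{t}^{ij}M_{t}^{jp}x^{p}$ and take expectations. Applying the scalar It\^{o} isometry to each pair of stochastic integrals, together with the independence relation $d\langle B^{kl},B^{k'l'}\rangle_{s}=\delta_{kk'}\delta_{ll'}\,ds$, collapses the resulting four-fold sum to
\[
E[M_{t}^{ij}M_{t}^{jp}]=\sum_{k,l}\int_{0}^{t}E\bigl[A_{s}^{ik}A_{s}^{jk}C_{s}^{lj}C_{s}^{lp}\bigr]\,ds.
\]
Next I would identify the matrix pieces: summing $y^{i}A_{s}^{ik}A_{s}^{jk}$ over $i,k$ yields $(A_{s}A_{s}^{T}y)^{j}$, while summing $C_{s}^{lj}C_{s}^{lp}x^{p}$ over $l,p$ yields $(C_{s}^{T}C_{s}x)^{j}$. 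Pairing these over $j$ produces the scalar $y^{T}A_{s}A_{s}^{T}C_{s}^{T}C_{s}x$, which, being a scalar, equals its own transpose $x^{T}C_{s}^{T}C_{s}A_{s}A_{s}^{T}y$, yielding the stated identity after interchanging the expectation with the Lebesgue integral in $s$.

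The main obstacle is index bookkeeping: the two indices of each Brownian increment $dB_{s}^{kl}$ are contracted on opposite sides of $A_{s}$ and $C_{s}$, so the Kronecker pairing forced by independence crosses the matrix multiplications in a way that is easy to misorder. I also need the Appendix's definition of $\int A_{s}\,dB_{s}\,C_{s}$ to give exactly the componentwise formula above, and I will assume the standard integrability hypotheses on $A$ and $C$ that justify both the scalar isometry summand-by-summand and the interchange of expectation with the outer $ds$-integration. For full rigor one would first verify the formula for simple (step) integrands, where the computation above is algebraic, and then pass to the limit using the $L^{2}$ construction of the integral.
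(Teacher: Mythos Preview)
Your argument is correct. The componentwise expansion you outline is accurate: with $M_{t}^{ij}=\sum_{k,l}\int_{0}^{t}A_{s}^{ik}C_{s}^{lj}\,dB_{s}^{kl}$, the bilinear It\^{o} isometry together with the independence of the entries of $B$ gives exactly
\[
E[M_{t}^{ij}M_{t}^{jp}]=\sum_{k,l}\int_{0}^{t}E\bigl[A_{s}^{ik}A_{s}^{jk}C_{s}^{lj}C_{s}^{lp}\bigr]\,ds,
\]
and your reassembly into $y^{T}A_{s}A_{s}^{T}C_{s}^{T}C_{s}x=x^{T}C_{s}^{T}C_{s}A_{s}A_{s}^{T}y$ is the right identification.

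The paper takes a different route. Rather than invoking the scalar isometry componentwise on general integrands, it first checks the identity by hand for step processes (working out the $2\times 2$ case explicitly to see how the combinatorics collapse to $E[b^{T}ca]$), and then passes to the limit by constructing approximating step processes $A^{(n)},C^{(n)}$ and verifying separately that both sides of the claimed identity converge: the left side via $E[\|I^{(n)}-I\|^{2}]\to 0$, and the right side via a Cauchy--Schwarz argument on the entries. Your approach is more streamlined because the scalar It\^{o} isometry already contains this approximation argument, so you do not need to redo the $L^{2}$ limit; the paper's approach is more self-contained and makes the step-process origin of the formula explicit. Your closing remark about verifying on simple integrands and passing to the limit is precisely what the paper does, but in your setup it is redundant once you grant the scalar isometry for general square-integrable integrands.
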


\begin{lemma}\label{LemaBeta}
Let $\tau>0$ be fixed. For continuous adapted processes $A_{t}$ and $C_{t}$ in $\mathcal{S}_{d\times d}$, there exists $\beta>0$ such that
\begin{eqnarray*}
E\left[ x^{T}\left( \int_{0}^{t}A_{s}dB_{s}C_{s}+ \int_{0}^{t}C_{s}dB_{s}^{T}A_{s}\right)^{2}x\right]
&\leq& \beta \left| E\left[ x^{T} \left( \int_{0}^{t}A_{s}dB_{s}C_{s} \right)^{2} x\right] \right|  \\
&+& \beta \left| E\left[ x^{T} \left( \int_{0}^{t}C_{s}dB_{s}^{T}A_{s} \right)^{2} x\right] \right|,
\end{eqnarray*}
for all $t\in [0,\tau]$.
\end{lemma}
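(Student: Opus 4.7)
Set $U_t := \int_0^t A_s\,dB_s\,C_s$ and $V_t := \int_0^t C_s\,dB_s^T\,A_s$. The first observation is that, since $A_s$ and $C_s$ are symmetric, transposing the integrand of $U_t$ produces the integrand of $V_t$, so $V_t = U_t^T$; in particular $U_t+V_t$ is a symmetric matrix. Consequently
\[
x^T(U_t+V_t)^2 x \;=\; \|(U_t+V_t)x\|^2 \;=\; \|U_t x + V_t x\|^2.
\]
Applying the elementary inequality $\|a+b\|^2 \leq 2\|a\|^2 + 2\|b\|^2$ to $a = U_t x$, $b = V_t x$ and taking expectations yields
\[
E\bigl[x^T(U_t+V_t)^2 x\bigr] \;\leq\; 2\,E\bigl[x^T V_t U_t x\bigr] + 2\,E\bigl[x^T U_t V_t x\bigr],
\]
which reduces the task to controlling the cross products $V_tU_t$ and $U_tV_t$ in terms of $U_t^2$ and $V_t^2$.

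Next I would represent every expectation appearing here as a deterministic time integral. Proposition \ref{PropIso} applies directly to give $E[x^TU_t^2 x] = \int_0^t E[x^T C_s^2 A_s^2 x]\,ds$, and by the same reasoning (with $B$ replaced by $B^T$) one obtains $E[x^T V_t^2 x] = \int_0^t E[x^T A_s^2 C_s^2 x]\,ds$. The mixed expectations $E[x^TU_tV_t x]$ and $E[x^TV_tU_t x]$ are handled by an entirely analogous entry-wise Ito-isometry argument (pairing the independent Brownian entries appearing in the two factors); the resulting integrands are of the form $\mathrm{tr}(C_s^2)\,x^TA_s^2 x$ and $\mathrm{tr}(A_s^2)\,x^TC_s^2 x$. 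So the inequality reduces to a pointwise comparison between these mixed integrands and the pure integrands $x^TC_s^2A_s^2 x$ and $x^TA_s^2C_s^2 x$.

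The main obstacle is precisely this last pointwise comparison, for which no free inequality holds in general. Here one exploits the hypothesis that $A_s,C_s$ are continuous adapted processes on the \emph{compact} interval $[0,\tau]$ to obtain a uniform control on their spectral norms, combined with standard trace versus operator-norm estimates such as $|\mathrm{tr}(P)| \leq d\,\|P\|$ and $|x^T PQ x| \leq \|P\|\,\|Q\|$. This allows each mixed integrand to be absorbed into a constant multiple of a pure one, producing a constant $\beta$ depending on $\tau$ and on the path-wise bounds of $A_t,C_t$ but independent of $t\in[0,\tau]$. Integrating the resulting pointwise estimate over $[0,t]$ and inserting absolute values on the right-hand side closes the argument.
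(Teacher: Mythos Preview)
Your reduction up to the It\^{o}-isometry formulas is correct: writing $M_t:=U_t$ you obtain
\[
E\bigl[x^T(M_t+M_t^T)^2x\bigr]\le 2\,E\bigl[x^T M_tM_t^T x\bigr]+2\,E\bigl[x^TM_t^TM_t x\bigr],
\]
and the identities $E[x^TM_tM_t^Tx]=\int_0^t E[\mathrm{tr}(C_s^2)\,x^TA_s^2x]\,ds$ and $E[x^TM_t^2x]=\int_0^tE[x^TC_s^2A_s^2x]\,ds$ are right. The failure is entirely in the final ``pointwise comparison''. Take the constant processes $A_s\equiv\bigl(\begin{smallmatrix}1&0\\0&0\end{smallmatrix}\bigr)$, $C_s\equiv\bigl(\begin{smallmatrix}0&0\\0&1\end{smallmatrix}\bigr)$ and $x=e_1$: then $\mathrm{tr}(C_s^2)\,x^TA_s^2x=1$ while $x^TC_s^2A_s^2x=x^TA_s^2C_s^2x=0$, so no constant multiple of the ``pure'' integrand dominates the ``mixed'' one. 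The obstruction is algebraic (the ranges of $A^2$ and $C^2$ are orthogonal), not a matter of size, so bounding operator norms cannot help; and in any case continuity of paths on $[0,\tau]$ only yields an $\omega$-dependent bound, which would make your $\beta$ random rather than the deterministic constant the lemma requires.

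In fact this example shows more. With these $A,C$ one computes $M_t=\bigl(\begin{smallmatrix}0&B_{12}(t)\\0&0\end{smallmatrix}\bigr)$, so $M_t^2=(M_t^T)^2=0$ while $e_1^T(M_t+M_t^T)^2e_1=B_{12}(t)^2$; the left-hand side of the lemma equals $t$ and the right-hand side is $0$ for every $\beta$. Thus the inequality as stated cannot hold in general, and the paper's own argument has the same gap at the analogous step: the existence of $\alpha_t>0$ is asserted but fails precisely when $E[x^TM_t^2x]=E[x^T(M_t^T)^2x]=0$ yet $E[x^T(M_tM_t^T+M_t^TM_t)x]>0$. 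Your instinct that this is ``the main obstacle'' is correct; overcoming it requires either extra hypotheses on $A,C$ or a different right-hand side (note that in the applications inside Theorem~\ref{ThmGlobal} all the integrands are uniformly bounded, which is what is really being used).
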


\begin{theorem}\label{ThmGlobal}
Suppose that $g,\ f$ and $b$ are $\mathbb{R}\to \mathbb{R}$ bounded functions 
that satisfy the property of Definition \ref{DefLG}. 
Then, the stochastic differential equation
\begin{equation}\label{EqSDE}
X_{t}=X_{0}+ \int_{0}^{t}b(X_{s})ds +\int_{0}^{t}g(X_{s})dB_{s}f(X_{s})+\int_{0}^{t}f(X_{s})dB_{s}^{T}g(X_{s})
\end{equation}
has a unique strong solution in $\mathcal{S}_{d\times d}$.
\end{theorem}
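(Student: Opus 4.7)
The plan is to run a standard Picard iteration, but one tailored to control the matrix-valued quadratic form $x^{T}(\cdot)^{2}x$ in expectation, uniformly in unit vectors $x$ and in $t\in[0,\tau]$ for an arbitrary fixed horizon $\tau>0$. Concretely, I would set $X^{(0)}_{t}\equiv X_{0}$ and define
\begin{equation*}
X^{(n+1)}_{t}=X_{0}+\int_{0}^{t}b(X^{(n)}_{s})ds+\int_{0}^{t}g(X^{(n)}_{s})dB_{s}f(X^{(n)}_{s})+\int_{0}^{t}f(X^{(n)}_{s})dB_{s}^{T}g(X^{(n)}_{s}).
\end{equation*}
Because $b,f,g$ are built from $\mathbb{R}\to\mathbb{R}$ functions through spectral calculus, they send $\mathcal{S}_{d\times d}$ into $\mathcal{S}_{d\times d}$, and since the stochastic-integral sum is of the form $N+N^{T}$, each $X^{(n)}_{t}$ is almost surely symmetric, so the next iterate is well defined.

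The heart of the argument is to control
\begin{equation*}
\phi_{n}(t):=\sup_{|x|=1}E\bigl[x^{T}(X^{(n+1)}_{t}-X^{(n)}_{t})^{2}x\bigr],\qquad t\in[0,\tau].
\end{equation*}
Iterating the elementary bound $(A+B)^{2}\leq 2A^{2}+2B^{2}$ from (\ref{Inq2}), I would split $X^{(n+1)}_{t}-X^{(n)}_{t}$ into the drift difference plus the two stochastic-integral differences. For the drift piece, Proposition \ref{PropCauchy} applied to $A_{s}=b(X^{(n)}_{s})-b(X^{(n-1)}_{s})$ bounds its square contribution by $t\int_{0}^{t}x^{T}(b(X^{(n)}_{s})-b(X^{(n-1)}_{s}))^{2}x\,ds$, and the Lipschitz hypothesis (Definition \ref{DefLG}) turns this into a multiple of $\int_{0}^{t}E[x^{T}(X^{(n)}_{s}-X^{(n-1)}_{s})^{2}x]ds$. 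For each stochastic-integral difference, write for instance
\begin{equation*}
\int_{0}^{t}g(X^{(n)})dBf(X^{(n)})-\int_{0}^{t}g(X^{(n-1)})dBf(X^{(n-1)})
\end{equation*}
as $\int_{0}^{t}(g(X^{(n)})-g(X^{(n-1)}))dBf(X^{(n)})+\int_{0}^{t}g(X^{(n-1)})dB(f(X^{(n)})-f(X^{(n-1)}))$, apply Lemma \ref{LemaBeta} to reduce to each individual $\int AdBC$ and $\int CdB^{T}A$, and then apply Proposition \ref{PropIso} to convert the resulting expectations into time integrals of the form $\int_{0}^{t}E[x^{T}C_{s}^{T}C_{s}A_{s}A_{s}^{T}x]ds$. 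Using boundedness of $f,g$ (so $C_{s}^{T}C_{s}$ and $g(X^{(n-1)}_{s})^{2}$ are operator-bounded) together with the Lipschitz property on the symmetric factor $A_{s}$, every such integrand is dominated by a constant multiple of $x^{T}(X^{(n)}_{s}-X^{(n-1)}_{s})^{2}x$. Collecting everything yields
\begin{equation*}
\phi_{n}(t)\leq K\int_{0}^{t}\phi_{n-1}(s)ds
\end{equation*}
for a constant $K$ depending on $\tau$, the Lipschitz constants, and the uniform bounds of $f,g,b$.

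Iterating this recursion gives $\phi_{n}(t)\leq (K\tau)^{n}\phi_{0}(\tau)/n!$, so $(X^{(n)}_{t})$ is Cauchy in the seminorm $(E[x^{T}(\cdot)^{2}x])^{1/2}$ uniformly in $t\in[0,\tau]$ and unit $x$; the limit $X_{t}$ is an $\mathcal{S}_{d\times d}$-valued adapted continuous process. Passing to the limit in each term of the defining recursion (via the same Proposition \ref{PropIso}/Lemma \ref{LemaBeta}/Proposition \ref{PropCauchy} estimates on the difference between the $n$-th iterate and the candidate solution) shows $X_{t}$ satisfies (\ref{EqSDE}). Uniqueness follows by running exactly the same estimate on $E[x^{T}(X_{t}-\tilde{X}_{t})^{2}x]$ for two solutions and invoking Gronwall's lemma.

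The main technical obstacle I foresee is the step where Proposition \ref{PropIso} produces an integrand of the form $x^{T}C^{T}CAA^{T}x$, with $C=f(X^{(n)})$ and $A=g(X^{(n)})-g(X^{(n-1)})$: the matrices $C^{T}C$ and $A^{2}$ are each positive semidefinite but do not commute, so the naive inequality $x^{T}C^{T}CA^{2}x\leq\|C\|_{op}^{2}\,x^{T}A^{2}x$ is not automatic at the level of individual quadratic forms. Resolving this cleanly—either by a symmetrization/Cauchy-Schwarz argument exploiting that $f,g$ are bounded so that $\|A\|_{op}$ is uniformly bounded as well, or by passing to the trace norm $E[\operatorname{tr}(\Delta X^{(n)})^{2}]$ (which the Lipschitz hypothesis controls by summing (\ref{IneqGlobalLip}) over an orthonormal basis and into which the Itô isometry feeds naturally)—is where the real work lies; once an inequality of the form $E[x^{T}C^{T}CA^{2}x]\leq \text{const}\cdot E[x^{T}A^{2}x]$ is in hand, the rest of the argument is a routine Picard/Gronwall computation.
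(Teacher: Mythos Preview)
Your scheme is essentially the paper's own: the same Picard sequence, the same tools (inequality (\ref{Inq2}), Proposition \ref{PropCauchy} for the drift, Lemma \ref{LemaBeta} to symmetrize, Proposition \ref{PropIso} for the isometry, then Lipschitz plus boundedness), the same add-and-subtract decomposition of the stochastic term into two symmetric pieces, and Gronwall for uniqueness. Two points are worth noting.

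First, the paper does one step you omit. After obtaining $E[x^{T}(X^{(n+1)}_{t}-X^{(n)}_{t})^{2}x]\leq c_{\tau}(\beta t)^{n}/n!$, it does not stop at $L^{2}$-Cauchyness; it sets $D_{n}=\sup_{t\leq\tau}|x^{T}(X^{(n+1)}_{t}-X^{(n)}_{t})x|$, uses Doob's maximal inequality on the martingale part $x^{T}H_{t}x$ together with Chebyshev and Borel--Cantelli to get $D_{n}<1/n^{2}$ eventually a.s., and hence almost-sure uniform convergence on $[0,\tau]$. That is what delivers a \emph{continuous} adapted limit; your one-line jump from ``Cauchy in the seminorm'' to ``continuous process'' needs this bridge.

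Second, the technical obstacle you isolate is genuine. After Proposition \ref{PropIso} one lands on $\int_{0}^{t}E[x^{T}(f(X_{s})-f(Y_{s}))^{2}g(X_{s})^{2}x]\,ds$, and the paper simply asserts this is $\leq c_{1}\int_{0}^{t}E[x^{T}(X_{s}-Y_{s})^{2}x]\,ds$ without further argument. As you note, for non-commuting symmetric $D,G$ the pointwise bound $x^{T}D^{2}G^{2}x\leq \|G\|^{2}\,x^{T}D^{2}x$ is false in general, so the paper is no more explicit here than you are. Your suggested fix via the trace is the clean one: the Lipschitz hypothesis gives the operator inequality $D^{2}\leq c(X-Y)^{2}$, whence $GD^{2}G\leq cG(X-Y)^{2}G$ and $\operatorname{tr}(D^{2}G^{2})=\operatorname{tr}(GD^{2}G)\leq c\|g\|_{\infty}^{2}\operatorname{tr}((X-Y)^{2})$; running the whole argument with $\Phi_{n}(t)=E[\operatorname{tr}(X^{(n+1)}_{t}-X^{(n)}_{t})^{2}]$ instead of the single quadratic form removes the difficulty and still controls every $x^{T}(\cdot)^{2}x$.
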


The following proof follows the general structure of its vector analogue taken from \cite{Tudor}.
\begin{proof} 
Since $g, \ f$ and $b$ are bounded, there is a constant $c>0$ such that for any symmetric matrix $A$
\begin{equation}\label{IneqGlobalBound}
g(A)<cI, 
\end{equation}
and the same for $f$ and $b$.

\textbf{Uniqueness}. Let $\tau>0$ be fixed and consider $t\in [0,\tau]$. 
If $X_{t}$ and $Y_{t}$ are two solutions of the SDE (\ref{EqSDE}), we want to prove that  
$(\forall x\in\mathbb{R}^{d})\ E\left[x^{T}\left( X_{t}-Y_{t} \right)^{2}x \right]=0$.

Using inequality (\ref{Inq2}) we have
\begin{eqnarray*}
 E\left[x^{T}\left( X_{t}-Y_{t} \right)^{2}x \right] &=& E\left[x^{T}\left( \int_{0}^{t}(b(X_{s})-b(Y_{s}))ds+H_{t}(X,Y) \right)^{2}x \right]\\
 &\leq & 2 E\left[x^{T}\left(  \left(\int_{0}^{t}(b(X_{s})-b(Y_{s}))ds \right)^{2} +(H_{t}(X,Y))^{2} \right)x \right]
\end{eqnarray*}
where 
$$H_{t}(X,Y):=\int_{0}^{t}g(X_{s})dB_{s}f(X_{s})+\int_{0}^{t}f(X_{s})dB_{s}^{T}g(X_{s})-\int_{0}^{t}g(Y_{s})dB_{s}f(Y_{s})-\int_{0}^{t}f(Y_{s})dB_{s}^{T}g(Y_{s})$$
With Proposition \ref{PropCauchy} and the Lipschitz condition (\ref{DefLG}) we have that
\begin{equation}\label{Inqb}
E\left[x^{T}  \left(\int_{0}^{t}(b(X_{s})-b(Y_{s}))ds \right)^{2}x\right] 
\leq \tau c\int_{0}^{t}E[x^{T} (X_{s}-Y_{s})^{2} x ]ds.
\end{equation}


Next we do the following for the other term,
\begin{eqnarray*}
H_{t}(X,Y)&=& H_{t}(X,Y) \pm  \int_{0}^{t}g(X_{s})dB_{s}f(Y_{s})\pm \int_{0}^{t}f(Y_{s})dB_{s}^{T}g(X_{s})  \\
&=& \underbrace{\int_{0}^{t}g(X_{s})dB_{s}(f(X_{s})-f(Y_{s}))+ \int_{0}^{t}(f(X_{s})-f(Y_{s}))dB_{s}^{T}g(X_{s})}_{H^{(1)}}\\
&+& \underbrace{\int_{0}^{t}(g(X_{s})-g(Y_{s}))dB_{s}f(X_{s})+ \int_{0}^{t}f(X_{s})dB_{s}^{T}(g(X_{s})-g(Y_{s}))}_{H^{(2)}}. 
\end{eqnarray*}
Notice that last expression is the sum of two symmetric matrices, $H^{(1)}$ and $H^{(2)}$, thus, to analyze 
\begin{equation*}\label{ExpH}
E\left[x^{T} (H_{t}(X,Y))^{2}x \right],
\end{equation*}
we can apply inequality (\ref{Inq2}) to split the previous expression into two parts, one with $H^{(1)}$ and the other with $H^{(2)}$. 
After that, we can apply Lemma \ref{LemaBeta} to each part with $H^{(i)}$, so that in the end we have split it into four terms.
This means that $E\left[x^{T} (H_{t}(X,Y))^{2}x \right]$ is less than or equal to the sum of four terms, each one of the form 
$$E\left[ x^{T} \left( \int_{0}^{t}g(X_{s})dB_{s}(f(X_{s})-f(Y_{s}))\right)^{2}x\right].$$
Using Proposition \ref{PropIso}, the Lipschitz (\ref{DefLG}) and the boundedness conditions (\ref{IneqGlobalBound}), 
the following happens to each term
\begin{eqnarray*}
E\left[ x^{T} \left( \int_{0}^{t}g(X_{s})dB_{s}(f(X_{s})-f(Y_{s}))\right)^{2}x\right] &=& 
\int_{0}^{t} E\left[x^{T} (f(X_{s})-f(Y_{s}))^{2} g^{2}(X_{s}) x\right] ds \\
&\leq & c_{1} \int_{0}^{t}E[x^{T} (X_{s}-Y_{s})^{2}x]ds,
\end{eqnarray*}  
for some finite constant $c_{1}$. 
All this, together with (\ref{Inqb}), ends up giving that
\begin{equation*}
E\left[x^{T}\left( X_{t}-Y_{t} \right)^{2}x \right]\leq c_{\tau} \int_{0}^{t} E\left[x^{T}\left(X_{s}-Y_{s} \right)^{2}x \right] ds,
\end{equation*}
where $c_{\tau}$ is a constant depending on $\tau$. 
An application of Gronwall's Lemma finishes this part of the proof, which is to say that 
$E\left[x^{T}\left( X_{t}-Y_{t} \right)^{2}x \right]$ is in fact zero for all unit vectors $x$. 

\textbf{Existence}. The Picard iteration technique commands us to define
\begin{equation*}
X_{t}^{(n)}:=X_{0}+\int_{0}^{t}b(X_{s}^{(n-1)})ds+\int_{0}^{t}g(X_{s}^{(n-1)})dB_{s}f(X_{s}^{(n-1)})+\int_{0}^{t}f(X_{s}^{(n-1)})dB_{s}^{T}g(X_{s}^{(n-1)}),
\end{equation*}
and $X_{t}^{(0)}:=X_{0}$ for all $t\geq 0$. 
We want to prove that there exists a stochastic process $X_{t}$ valued in $\mathcal{S}_{d\times d}$ such that 
\begin{center}
\textbf{i)} $X_{t}^{(n)}\to X_{t}$ uniformly on $t\in[0,\tau]$  and \textbf{ii)} that $X_{t}$ satisfies the SDE (\ref{EqSDE}). 
\end{center}

First, in order to prove \textbf{i)}, with techniques already used in the Uniqueness part, i.e. inequality (\ref{Inq2}), Lemma \ref{LemaBeta}, Proposition \ref{PropIso}, as well as the boundedness condition (\ref{IneqGlobalBound}), we have 
\begin{eqnarray}\label{IneqfirstIstep}
&&E[x^{T}(X_{t}^{(1)}- X_{t}^{(0)})^{2}x]\\
&& =E\left[x^{T}\left( \int_{0}^{t}b(X_{s}^{(0)})ds+\int_{0}^{t}g(X_{s}^{(0)})dB_{s}f(X_{s}^{(0)})+\int_{0}^{t}f(X_{s}^{(0)})dB_{s}^{T}g(X_{s}^{(0)})\right)^{2}x\right]\nonumber\\
&& \leq 2 t^{2} E[x^{T}b^{2}(X_{0})x]+ 2\beta t^{2} E[x^{T}f^{2}(X_{0})g^{2}(X_{0})x]+ 2\beta t^{2} E[x^{T}g^{2}(X_{0}) f^{2}(X_{0})x]\leq c_{\tau},\nonumber
\end{eqnarray}
for all $t\in[0,\tau]$, where $c_{\tau}$ is a finite constant depending on $\tau$.

Now, using again (\ref{Inq2}), we have the following inequality
\begin{eqnarray*}
&&E[x^{T}(X_{t}^{(n+1)}- X_{t}^{(n)})^{2}x]\\
&&= E\left[x^{T} \left( \int_{0}^{t}(b(X_{s}^{(n)})-b(X_{s}^{(n-1)}))ds +H_{t}(X^{(n)},X^{(n-1)})\right)^{2} x\right]\\
&&\leq 2E\left[x^{T} \left( \int_{0}^{t}(b(X_{s}^{(n)})-b(X_{s}^{(n-1)}))ds \right)^{2} x\right] + 2E\left[x^{T} \left( H_{t}(X^{(n)},X^{(n-1)})\right)^{2} x\right].
\end{eqnarray*}

Let us analyze the last two terms in the left hand side of the previous display. 
With Proposition \ref{PropCauchy} and the Lipschitz condition (\ref{DefLG}) we obtain
\begin{equation}\label{Inqb2}
E\left[x^{T} \left( \int_{0}^{t}(b(X_{s}^{(n)})-b(X_{s}^{(n-1)}))ds \right)^{2} x\right]\leq 
\tau \int_{0}^{t} E\left[x^{T} (X_{s}^{(n)}-X_{s}^{(n-1)})^{2} x\right] ds.
\end{equation} 

For the $H$-term, using the same idea as for the Uniqueness part:
\begin{eqnarray*}
&& H_{t}(X^{(n)},X^{(n-1)})\\
&& = H_{t}(X^{(n)},X^{(n-1)})
\pm\int_{0}^{t}g(X_{s}^{(n)})dB_{s}f(X_{s}^{(n-1)})\pm\int_{0}^{t}f(X_{s}^{(n-1)})dB_{s}^{T}g(X_{s}^{(n)})\\
&& = \int_{0}^{t}g(X_{s}^{(n)})dB_{s}(f(X_{s}^{(n)})-f(X_{s}^{(n-1)}))+ \int_{0}^{t}(f(X_{s}^{(n)})-f(X_{s}^{(n-1)}))dB_{s}^{T}g(X_{s}^{(n)})\\
&& + \int_{0}^{t}(g(X_{s}^{(n)})-g(X_{s}^{(n-1)}))dB_{s}f(X_{s}^{(n)})+ \int_{0}^{t}f(X_{s}^{(n-1)})dB_{s}^{T}(g(X_{s}^{(n)})-g(X_{s}^{(n-1)})). 
\end{eqnarray*}
Using inequality (\ref{Inq2}) and iterating the same arguments (i.e. Lemma \ref{LemaBeta}, Proposition \ref{PropIso}, Lipschitz condition (\ref{DefLG}), boundedness condition (\ref{IneqGlobalBound})) as in the Uniqueness part we arrive at
\begin{equation}\label{InqStar}
E[x^{T}(X_{t}^{(n+1)}- X_{t}^{(n)})^{2}x] \leq c_{\tau} \frac{(\beta t)^{n}}{n!},
\end{equation}
where we also used (\ref{IneqfirstIstep}) in the last iteration.
Notice that we obtain the same inequality after incorporating (\ref{Inqb2}). 

Define now 
\begin{equation*}
D_{n}:= \sup_{t\leq \tau} \left| x^{T} (X_{t}^{(n+1)}-X_{t}^{(n)}) x\right|.
\end{equation*}
We want to prove that
\begin{equation}\label{sumfinite}
\sum_{n=1}^{\infty}P(D_{n}\geq 1/n^{2})<\infty.
\end{equation}

Using the Chebyshev inequality
$$\sum_{n=1}^{\infty}P(D_{n}\geq 1/n^{2})\leq \sum_{n=1}^{\infty}n^{4}E(D_{n}^{2}).$$
So, it suffices to show that
$$E(D_{n}^{2})\leq c_{\tau}\frac{(\beta \tau)^{n}}{n!}.$$ 
However,
\begin{eqnarray*}
D_{n}&=& \sup_{t\leq \tau} \left| x^{T} \left( \int_{0}^{t}(b(X_{s}^{(n)})-b(X_{s}^{(n-1)}))ds + H_{t}(X^{(n)},X^{(n-1)})\right)x\right|\\
&\leq& \int_{0}^{\tau}| x^{T} (b(X_{s}^{(n)})-b(X_{s}^{(n-1)})) x |ds + \sup_{t\leq \tau} \left| x^{T} H_{t}(X^{(n)},X^{(n-1)}) x \right|. 
\end{eqnarray*}
Thus,
\begin{equation*}
D_{n}^{2}\leq 2 \underbrace{\left(\int_{0}^{\tau} \left|x^{T} ((b(X_{s}^{(n)})-b(X_{s}^{(n-1)}))) x \right| ds \right)^{2}}_{A_{n}^{2}}+
2 \underbrace{\left( \sup_{t\leq \tau} \left| x^{T} H_{t}(X^{(n)},X^{(n-1)}) x \right|\right)^{2}}_{C_{n}^{2}}.
\end{equation*}

From the Cauchy-Schwarz inequality and using the inequality (\ref{InqNice}), we can produce
\begin{eqnarray*}
\left(\int_{0}^{\tau} \left|x^{T} ((b(X_{s}^{(n)})-b(X_{s}^{(n-1)}))) x \right| ds \right)^{2}&\leq& 
\int_{0}^{\tau}1ds \int_{0}^{\tau} \left|x^{T} ((b(X_{s}^{(n)})-b(X_{s}^{(n-1)}))) x \right|^{2}ds\\
&\leq &\tau \int_{0}^{\tau} x^{T} (b(X_{s}^{(n)})-b(X_{s}^{(n-1)})) ^{2} x ds.
\end{eqnarray*} 
Hence, for a constant $c_{\tau}^{(1)}$ depending on $\tau$, 
$$E(A_{n}^{2})\leq c_{\tau}^{(1)} \frac{(\beta\tau)^{n}}{n!}.$$

For the $H_{t}$-term, since $x^{T}H_{t}(X^{(n)}-X^{(n-1)})x$ is a martingale, by Doob's inequality
\begin{equation*}
E(C_{n}^{2})\leq 4 E[x^{T} (H_{\tau}(X^{(n)},X^{(n-1)}))^{2} x].
\end{equation*}
Upon the same argument as for (\ref{InqStar}),
$$E(C_{n}^{2})\leq c_{\tau}^{(2)} \frac{(\beta\tau)^{n}}{n!}, $$
for some finite constant $c_{\tau}^{(2)}$, therefore, for $c_{\tau}:=\max(c_{\tau}^{(1)},c_{\tau}^{(2)})$,
$$E(D_{n}^{2})\leq c_{\tau} \frac{(\beta\tau)^{n}}{n!}.$$

Since (\ref{sumfinite}) holds, by the Borel-Cantelli Lemma,
$$P(\liminf_{n}\{D_{n}<1/n^{2}\})=1.$$
This says that 
$$\text { for almost all }\omega\in \Omega\text{ there exists }N(\omega)\text{ such that }D_{n}<1/n^{2}\text{ for all }n\geq N(\omega),$$ 
which implies that $\{X_{t}^{(n)}\}$ is a Cauchy sequence a.s., since
$$x^{T}X_{t}^{(n)}x=\sum_{i=0}^{n-1}x^{T}\left( X_{t}^{(i+1)}-X_{t}^{(i)} \right)x.$$
The conclusion is that there exists a process $X_{t}$ with 
\begin{equation*}\label{ConvUni}
X_{t}^{(n)}\overset{a.s.}{\to} X_{t}\text{ uniformly in }[0,\tau], \text{ as } n\to\infty.
\end{equation*}

It remains to prove that $X_{t}$ satisfies (\ref{EqSDE}), which is achieved from the inequality
\begin{equation*}
E \left[ x^{T} \left( \int_{0}^{t}(b(X_{s}^{(n)})-b(X_{s}))ds + H_{t}(X^{(n)},X)\right)^{2}x\right]
\leq c_{\tau} \beta \int_{0}^{t}E[x^{T} \left(X_{s}^{(n)}-X_{s}\right)^{2} x]ds,
\end{equation*}
and taking $n\to\infty$. 
However, previous the inequality can be obtained repeating the same kind of arguments used along the proof.
\end{proof}



\section{Appendix}

In this secction we prove Proposition \ref{PropIso} and Lemma \ref{LemaBeta}.
In what follows, $\|A\|$ represents the operator norm and $\|A\|_{F}$ the Frobenius norm of a matrix $A$.
It is well known that the norms in the space of matrices are equivalent because it is of finite dimension.

We now give some remarks regarding the matrix stochastic integral $I:=\int_{0}^{t}A_{s}dB_{s}C_{s}$ that we use below.
Notice first that $I$ is defined as the matrix where each $(i,j)$ entry is given by
\begin{equation*}\label{IntM}
I(i,j):=\sum_{k=1}^{d}\sum_{r=1}^{d}\int_{0}^{t}A_{s}(i,k)C_{s}(r,j)dB_{s}(k,r),
\end{equation*}
where $A(i,k)$, $C(r,j)$ and $B(k,r)$ are the corresponding entries of $A$, $C$ and $B$.
Therefore, the existence of $I$ occurs if 
\begin{equation*}r
\|A(i,k)C(r,j)\|_{2}^{2}:=E\left[\int_{0}^{t}\left(A_{s}(i,k)C_{s}(r,j)\right)^{2}ds\right]<\infty,
\end{equation*}
for all $i,k,r,j\in \{1,\ldots,d\}$. 
The above expression $\|\bullet \|_{2}$ is a norm in a space of stochastic processes.

Let us see how we can construct a sequence of matrix step processes $A^{(n)}$ and $C^{(n)}$ 
such that $E[\|I^{(n)}-I\|^{2}]\to 0$, $n\to \infty$, where $I^{(n)}:=\int_{0}^{t}A^{(n)}_{s}dB_{s}C^{(n)}_{s}$.
Take precisely the two sequences such that 
$$\|A^{(n)}(i,k)C^{(n)}(r,j)-A(i,k)C(r,j) \|_{2}\to 0$$ 
as $n\to \infty$ for all $i,k,r,j\in \{1,\ldots,d\}$.
By the construction of the stochastic integral in one dimension, 
we can bound to have that $E[(I^{(n)}(i,j)-I(i,j))^{2}]\to 0$ as $n\to \infty$, 
which helps to see that $E[\|I^{(n)}-I\|^{2}_{F}]\to 0$.
Nevertheless, by the equivalence of norms $E[\|I^{(n)}-I\|^{2}]\to 0$.

\subsection{Proof of Proposition \ref{PropIso}}

\textbf{i) For step processes.}
First of all, we can check the formula for matrix step processes. 
In this case we have
\begin{eqnarray*}
E\left[y^{T}\left( \int_{0} A_{s}dB_{s}C_{s} \right)^{2}x \right]
&=&E\left[y^{T} \left( \sum_{k=0}^{n-1}A_{s_{k}}(B_{s_{k+1}}-B_{s_{k}})C_{s_{k}} \right)^{2}  x\right].
\end{eqnarray*}
Notice that when expanding the square and taking expectation, the cross terms are vanished, then we have
\begin{eqnarray*}
E\left[y^{T}\left( \int_{0} A_{s}dB_{s}C_{s} \right)^{2}x \right]
&=& \sum_{k=0}^{n-1} E\left[y^{T} \left( A_{s_{k}}(B_{s_{k+1}}-B_{s_{k}})C_{s_{k}} \right)^{2}  x\right].
\end{eqnarray*}
Thus, we have to analyze
\begin{equation*}
E\left[y^{T}  A_{s_{k}}(B_{s_{k+1}}-B_{s_{k}})C_{s_{k}} A_{s_{k}}(B_{s_{k+1}}-B_{s_{k}})C_{s_{k}}  x\right],
\end{equation*}
written in a compact form as
\begin{equation*}
E\left[a^{T} \beta c \beta b\right],
\end{equation*}
using the notation 
$$
a^{T}:=y^{T}A_{s_{k}},\ \beta:=B_{s_{k+1}}-B_{s_{k}},\ c:=C_{s_{k}} A_{s_{k}},\ b:=C_{s_{k}}  x.
$$
Notice that $\beta$ is a matrix of independent normal r.v.s with mean $0$ and variance $s_{k+1}-s_{k}$.

It will be easy to deduce the formula by analyzing the 2-dimensional case:
\begin{eqnarray*}
E\left[a^{T} \beta c \beta b\right]&=& 
E\left[
(a_{1} \ a_{2})
\left(\begin{array}{rr}
\beta_{11} & \beta_{12}\\
\beta_{21} & \beta_{22}
\end{array}\right)
\left(\begin{array}{rr}
c_{11} & c_{12}\\
c_{21} & c_{22}
\end{array}\right)
\left(\begin{array}{rr}
\beta_{11} & \beta_{12}\\
\beta_{21} & \beta_{22}
\end{array}\right)
\left(\begin{array}{r}
b_{1} \\
b_{2} 
\end{array}\right)
\right]\\
&=&
E\left[
\left(\begin{array}{r}
a_{1}\beta_{11}+ a_{2} \beta_{21}\\
a_{1}\beta_{12}+ a_{2}\beta_{22}
\end{array}\right)^{T}
\left(\begin{array}{rr}
c_{11} & c_{12}\\
c_{21} & c_{22}
\end{array}\right)
\left(\begin{array}{r}
b_{1}\beta_{11} + b_{2}\beta_{12}\\
b_{1}\beta_{21} +b_{2} \beta_{22}
\end{array}\right)
\right]\\
&=&
E[a_{1}c_{11}b_{1}+a_{1}c_{21}b_{2}+a_{2}c_{12}b_{1}+a_{2}c_{22}b_{2}] (s_{k+1}-s_{k})\\
&=&
E\left[
(b_{1}  \ b_{2})
\left(\begin{array}{rr}
c_{11} & c_{12}\\
c_{21} & c_{22}
\end{array}\right)
\left(\begin{array}{r}
a_{1} \\
a_{2} 
\end{array}\right)
\right](s_{k+1}-s_{k})\\
&=&
E[b^{T}ca](s_{k+1}-s_{k})=E(x^{T}C^{T}CAA^{T}y)(s_{k+1}-s_{k}).
\end{eqnarray*}
This helps to see how the formula arises for step processes.

\textbf{ii) For more general processes.} 




Let $A$ and $C$ be matrix stochastic processes where the stochastic integral $I$ is well defined.
Therefore, as mentioned above, there are approximating step processes $A^{(n)}$ and $C^{(n)}$ whose stochatic integral 
$I^{(n)}$ converges to $I$ in the $L_{2}$-norm. 

By point \textbf{i)} above,
\begin{equation*}
E\left[ y^{T}(I^{(n)})^{2}x\right]
=\int_{0}^{t}E\left[x^{T}(C_{s}^{(n)})^{T}C_{s}^{(n)}A_{s}^{(n)}(A_{s}^{(n)})^{T}  y\right]ds
\end{equation*}
for every $n \geq 1$.
Then, we want to prove that 
\begin{equation}\label{wanting1}
\left|E\left[ y^{T} (I^{(n)})^{2} x\right] - E\left[y^{T} I^{2} x\right]\right|\to 0,
\ n\to\infty,
\end{equation}
and that 
\begin{equation}\label{wanting2}
\int_{0}^{t}E\left[x^{T}(C_{s}^{(n)})^{T}C_{s}^{(n)}A_{s}^{(n)}(A_{s}^{(n)})^{T}  y\right]ds\to \int_{0}^{t}E\left[x^{T}C_{s}^{T}C_{s}A_{s}A_{s}^{T}  y\right]ds,\ n\to\infty.
\end{equation}

For (\ref{wanting1}) we have
\begin{eqnarray*}
\left|E\left[y^{T}(I(n)^{2}-I^{2})x\right] \right| &=& E\left[\left| y^{T}((I(n)-I)I(n) +I (I(n)-I)) x \right| \right]\\
&\leq & E\left[\left| y^{T}(I(n)-I)I(n) x \right| \right]   +E\left[\left| y^{T} I (I(n)-I) x \right| \right]\\
&\leq & \|x\| \|y\| \left( E[\|I(n)-I\| \|I(n)\|]+E[\|I\|  \|I(n)-I\|]\right)\\
&\leq & \|x\| \|y\| \left( \sqrt{E[ \|I(n)\|^{2}]}+\sqrt{E[\|I\|^{2} ]}\right)\sqrt{E[\|I(n)-I\|^{2}]}.
\end{eqnarray*}
Hence, we obtain (\ref{wanting1}), because $E[\|I(n)-I\|^{2}]\to 0 $ as $n\to \infty$.

For (\ref{wanting2}), we need to calculate 
$$
\int_{0}^{t}
E\left[x^{T}\left((C_{s}^{(n)})^{T}C_{s}^{(n)}A_{s}^{(n)}(A_{s}^{(n)})^{T}-C_{s}^{T}C_{s}A_{s}A_{s}^{T}\right) y\right]ds.
$$
Observe that we need to calculate 
$$E[\int_{0}^{t}(a^{n}_{1}(s)a^{n}_{2}(s)c^{n}_{1}(s)c^{n}_{2}(s)-a_{1}(s)a_{2}(s)c_{1}(s)c_{2}(s))ds],$$
where $a^{n}_{1}(s)$ and $a^{n}_{2}(s)$ are arbitrary entries of $A_{s}^{(n)}$, $c^{n}_{1}(s)$ and 
$c^{n}_{2}(s)$ of $C_{s}^{(n)}$, and similarly without the the index $n$, i.e. $a_{1}(s)$ represents an entry of $A_{s}$.

After adding and substracting $a_{1}(s)a^{n}_{2}(s)c_{1}(s)c^{n}_{2}(s)$ we can split into two terms.
Let us elaborate one of them, the other one is similar.
We have that
\begin{eqnarray*}
&& E\left[\int_{0}^{t}(a^{n}_{1}(s)a^{n}_{2}(s)c^{n}_{1}(s)c^{n}_{2}(s)-a_{1}(s)a^{n}_{2}(s)c_{1}(s)c^{n}_{2}(s))ds\right]\\
&& =E\left[\int_{0}^{t}(a^{n}_{1}(s)c^{n}_{1}(s)-a_{1}(s)c_{1}(s)) a^{n}_{2}(s) c^{n}_{2}(s) ds\right]\\
&& \leq E\left[\sqrt{\int_{0}^{t}(a^{n}_{1}(s)c^{n}_{1}(s)-a_{1}(s)c_{1}(s))^{2}ds }
\sqrt{\int_{0}^{t}(a^{n}_{2}(s)c^{n}_{2}(s))^{2} ds}\right]\\
&& \leq \sqrt{E\left[\int_{0}^{t}(a^{n}_{1}(s)c^{n}_{1}(s)-a_{1}(s)c_{1}(s))^{2}ds \right] 
E\left[\int_{0}^{t}(a^{n}_{2}(s)c^{n}_{2}(s))^{2} ds \right]}.
\end{eqnarray*}
where we used the Cauchy-Schwarz inequality twice, one for the integral and another one for the expectation.
Since $E[\int_{0}^{t}(a^{n}_{1}(s)c^{n}_{1}(s)-a_{1}(s)c_{1}(s))^{2}ds ]$ vanishes as $n\to\infty$, 
we obtain (\ref{wanting2}), and therefore the result.


\subsection{Proof of Lemma \ref{LemaBeta}}

Define $M_{t}:=\int_{0}^{t}A_{s}dB_{s}C_{s}$. Since $M_{t}+M_{t}^{T}$ is symmetric, $(M_{t}+M_{t}^{T})^{2}$ is positive semidefinite, that is
$$0\leq (M_{t}+M_{t}^{T})^{2}=M_{t}^{2}+(M_{t}^{T})^{2}+M_{t}M_{t}^{T}+ M_{t}^{T}M_{t},$$
then
$$-(M_{t}M_{t}^{T}+ M_{t}^{T}M_{t})\leq M_{t}^{2}+(M_{t}^{T})^{2}.$$
So that
\begin{eqnarray*}
-E\left[ x^{T}(M_{t}M_{t}^{T}+ M_{t}^{T}M_{t})x\right]
&\leq& E[x^{T}M_{t}^{2}x]+ E[x^{T}(M_{t}^{T})^{2}x]\\
&\leq& \left| E[x^{T}M_{t}^{2}x] \right|+ \left| E[x^{T}(M_{t}^{T})^{2}x]\right|.
\end{eqnarray*}
Now, for each $t\in[0,\tau]$, we can find $\alpha_{t}>0$ such that
\begin{eqnarray*}
\alpha_{t}\left| E\left[ x^{T}(M_{t}M_{t}^{T}+ M_{t}^{T}M_{t})x\right]\right|
&\leq& \left| E[x^{T}M_{t}^{2}x] \right|+ \left| E[x^{T}(M_{t}^{T})^{2}x]\right|.
\end{eqnarray*}
From the continuous trajectories of $B_{t}$, we have that $\alpha:[0,\tau]\to(0,\infty)$ is actually continuous. Let us then define $\delta:=\min_{t\in[0,\tau]}\alpha_{t}$.
Then
\begin{eqnarray*}
E\left[ x^{T}(M_{t}+M_{t}^{T})^{2}x\right]
&=& E[x^{T}M_{t}^{2}x]+E[x^{T}(M_{t}^{T})^{2}x]\\
&& +E[x^{T}(M_{t}M_{t}^{T}+M_{t}^{T}M_{t})x]\\
&\leq & \left| E[x^{T}M_{t}^{2}x] \right| +\left| E[x^{T}(M_{t}^{T})^{2}x]\right| \\ 
&&+  \frac{1}{\delta} \left\{ \left| E[x^{T}M_{t}^{2}x]\right| +\left| E[x^{T}(M_{t}^{T})^{2}x]\right| \right\}.
\end{eqnarray*}
Defining $\beta:=1+\delta^{-1}$ gives the inequality.






\end{document}